\documentclass[12pt,english]{article}
\usepackage{babel}
\usepackage[cp1250]{inputenc}
\usepackage[T1]{fontenc}
\usepackage{amsfonts}
\usepackage{amsmath}
\usepackage{amsthm}
\usepackage{mathrsfs}
\usepackage{hyperref}
\setlength{\topmargin}{-0.5cm} \setlength{\oddsidemargin}{-0.0cm}
\setlength{\textheight}{21.5cm} \setlength{\textwidth}{16cm}
\usepackage{enumerate}
\usepackage{graphicx}
\usepackage{pictex}
\usepackage{color}
\usepackage{loc191118}

\title{Existence and regularity result for Stokes system with special inlet/outlet condition}

\author{Kamil Wo\l os, Przemys\l aw Kosewski
\footnote{
Department of Mathematics and Information Sciences, Warsaw University of Technology, ul. Koszykowa 75,
00-662 Warsaw, Poland, E-mail addresses: kamwolos@gmail.com, P.Kosewski@mini.pw.edu.pl} }

\begin{document}
\maketitle

\begin{abstract}
Our aim is to analyse special type of boundary conditions, created to simulate flows like in cardiovascular and respiratory systems. Firstly, we will describe model of viscous, incompressible fluid in a domain consisting many inlets and outlets with open dissipative boundary conditions. The conditions are augmented by the inertia terms. We are posing additional constrains on a fluid motion by a volumetric flow rates or inlet/outlet pressure. Afterwards, we will define weak formulation of the problem and its motivation. Then, we will prove mathematical correctness of proposed conditions by properly modified Galerkin method. Also, we will prove existence of a solution and its uniqueness. 
\end{abstract}
Keywords: cardiovascular, open-dissipative, unsteady Stokes,   existence, uniqueness, regularity, Galerkin method.
\vspace{5mm}

\noindent
AMS subject classifications (2010): 35Q35, 76F02

\section{Introduction}
Numerical modelling of unsteady incompressible flows in the large domains with many branches is still a big challenge for both  mathematicians and engineers. Suppose that we are interested in the fluid flow simulation in domain, which consists of the bifurcation tree, where every branch is divided onto two sub-branches, [see pic. \ref{fig:dziedzina_existence}]. The system of branches has approximately 16 levels of bifurcation. This situation can appear for example in respiratory or cardiovascular systems. 

\begin{figure}
	\centering
	\resizebox{0.5\textwidth}{!}{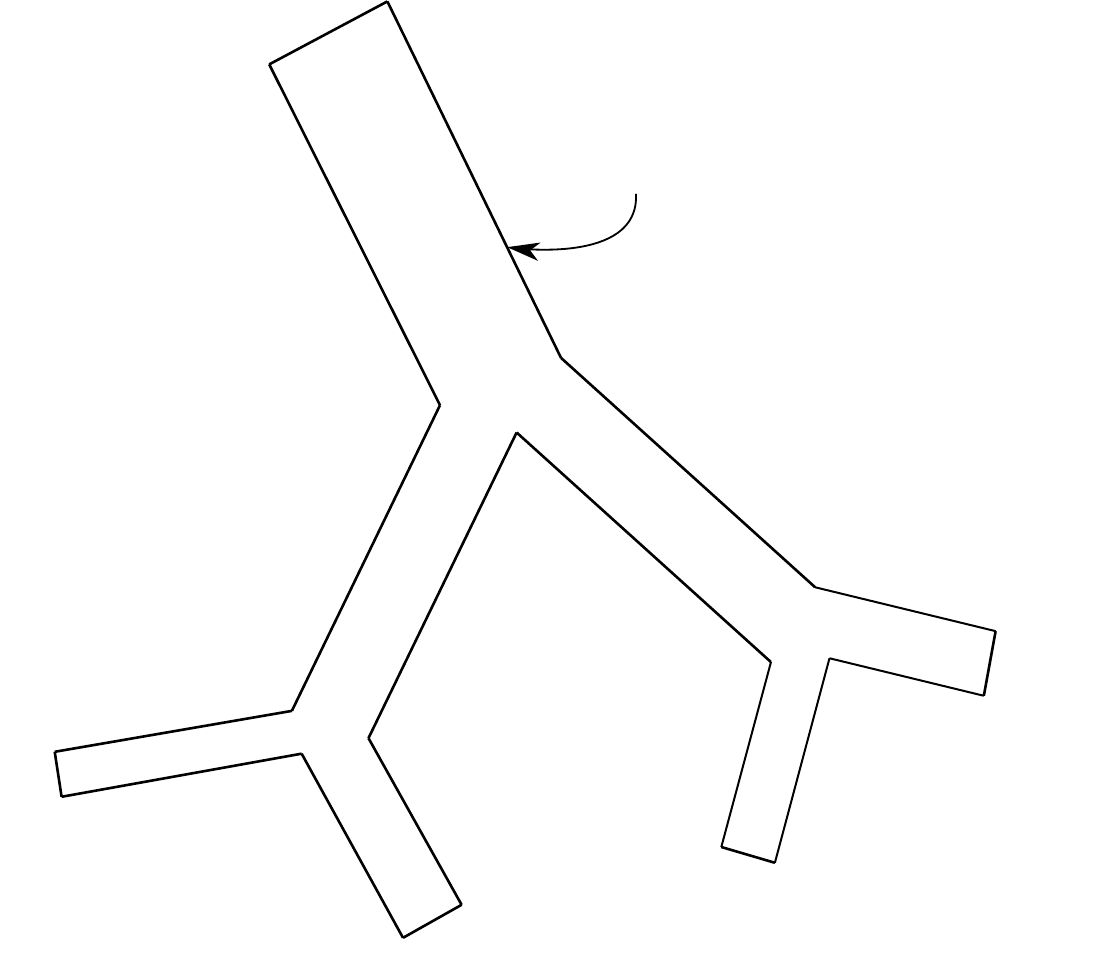}	
	\caption{The scheme of the domain.}
	\label{fig:dziedzina_existence}
\end{figure}

We can distinguish two major difficulties, that arise in this setting. First is geometric complexity of the domain. The whole system is too large from the numerical point of view, thus the computation of the velocity field in whole domain is out of reach. However, in many cases the flow simulation for whole domain is not necessary and it is enough, if we consider only a part of the system. Hence, the second problem is how to impose boundary conditions for the restricted domain. 

In the restricted domain problems we use, so called, artificial boundary conditions. They are ,,artificial'' in the sense, that these conditions are simply pipe sections separating the domain of interest from another component of the network. 

There are many approaches for this problem. In \cite{Heywood} the \textit{do-nothing} boundary conditions for network systems have been proposed. In \cite{Formaggia} have been described boundary conditions based on Lagrange multipliers. Many other approaches were also described in Maury's monograph \cite{Maury}. In this paper we will focus on a special type of the boundary conditions called open dissipative. The starting point is the paper of Szumbarski, \cite{Szumb}, where full description of the mechanical interpretation of the unsteady Stokes problem can be found, as well as numerical analysis. The boundary conditions from \cite{Szumb} are strictly related to general open disspiative conditions, which can be found in \cite{Maury}. 

The main purpose of this paper is to establish results concerning existence and uniqueness of weak solutions to the problem from \cite{Szumb}. Now we will briefly introduce the model. 

The main outline of the model is as follows. The domain consist of the rigid impermeable wall $\Gamma_0$ with no-slip boundary condition and the inlet/outlet sections $\Gamma_k, \, k = 1, \ldots, K$, where open/dissipative boundary conditions are imposed. Description of this boundary conditions can be found in \cite{Maury}. In general, we assume, that inlet/outlet is connected with outside world by the virtual pipe, where the Poiseuille's law is preserved. Then, the pressure difference between at the outlet and in the far field can be expressed as a linear dependence with the flux:
\begin{align*}
	R \int_\Gamma v \cdot n \, d \Gamma = \Pi - S,
\end{align*}
where $\Pi n = \nu \nabla u - p n$ and $p, v, \nu$ denote, respectively, pressure, velocity and dynamic viscosity of a fluid. The symbol $n$ denotes the external normal vector to the boundary. In our case the scalar function $S$ is given. 
 
In \cite{Szumb} imposed inlet/outlet are modified in a following way: 
\begin{align}
	\hspace{3mm} p n - \nu \nabla v \cdot n - n \left( \lambda_k + \gamma_k  \frac{d}{dt} \right) \left( v \cdot n \right) = S_k n \quad \text{na } \Gamma_k, \, k = 1, \ldots , K.
	\label{WB_B2}
\end{align}
The coefficients $\lbrace \lambda_k, \gamma_k \rbrace > 0$ are given for all $k = 1, \ldots, K$. We assume, that the scalar functions $\lbrace S_1(t), \ldots, S_K(t) \rbrace$ are also given. 

The physical interpretation is following. Suppose that given inlet/outlet is flat. It can be shown that 
\begin{align*}
	\int_{\Gamma_k} \tau \cdot (\nabla u \cdot n) d \Gamma = 0,
\end{align*}
where $\tau$ is perpendicular vector to $n$. If we integrate (\ref{WB_B2}) over $\Gamma_k$ and divide by $|\Gamma_k|$, it leads to he following equation 
\begin{align*}
	\overline{p_k} - S_k = |\Gamma_k|^{-1} \left( \lambda_k + \gamma_k \frac{d}{dt} \right) \int_{\Gamma_k} v \cdot n \, d \Gamma.
\end{align*}
The quantity $\overline{p_k} = |\Gamma_k|^{-1} \int_{\Gamma_k} p \, dS$ is average pressure on the inlet/outlet. We can interpret this situation as the difference between pressure on inlet/outlet and pressure in the far field. The difference is expressed as a sum of two components: static (which is proportional to flow rate $\int_{\Gamma_k} v \cdot n \, d \Gamma$) and dynamic (which is proportional to the rate of change of the flow rate). 
In \cite{Szumb} the author obtained numerical solution based on the splitting method. In this paper we give a proof that unsteady, incompressible Stokes equation: 
\begin{align}
	\begin{aligned}
		v_t - \nu \Delta v + \nabla p = f, \\
		\nabla \cdot v = 0,
	\end{aligned}
	\label{stokes_sys}
\end{align}
\begin{equation}
	v|_{\Gamma_0} = 0,
	\label{no_slip_cond}
\end{equation}
with boundary condition (\ref{WB_B2}) possesses a weak solution (see definition \ref{weak_sol_def}), which is unique.

\section{Notation}
In this section we will introduce notation for function spaces, that will be used in following sections. Firstly, let $V$ denote
\begin{align*}
V = \left \{
v \in H^1(\Omega): \nabla \cdot v = 0 \text{ in } \Omega, \, v|_{\Gamma_0} = 0
\right \}.
\end{align*}
Let $n_k$ be a normal unit vector to surface $\Gamma_k$. 
Next, let space $\VH$ be defined by 
\eqnsl{
\VH = \left \{
v \in H^2(\Omega) \cap V: \forall_{k \in \{ 1, \dots, K \}} \left( \nabla v \cdot n_k \right) \cdot \tau |_{\Gamma_k} =  0, \forall_{\tau: \tau \cdot n_k = 0}
\right \}
}{VH2}
Definition of space $ \VH $ (\ref{VH2}) is motivated by condition (\ref{WB_B2}).
Space $\VH$ is closed subspace of $H^2(\Omega)$, and thus is also a Hilbert space.  
Thus, scalar product in $\VH$ can be defined in a following way 
\eqnsl{
\il{u}{v}_{\VH} = \il{\nabla^2 u}{ \nabla^2 v} + \il{\nabla u}{ \nabla v} + \il{u}{v} + \skk \gak \il{u \cdot n}{v \cdot n}_{\Gamma_k},
}{VH_scalar_product}
where $\lk$ was introduced in condition (\ref{WB_B2}). 
We see, that defined product $\il{\cdot}{\cdot}_{\VH}$ denotes equivalent norm to standard norm in $H^2(\Omega)$ due to trace theorem. \\
Next, space $ \VH $ is separable, as a subset of separable metric space, and thus has orthonormal basis $ \{ w_j \}_{j = 1}^\infty$. Furthermore, space $ \VH $ is a dense subset of $ V $. 
\noindent
Additionally, matrix $M_m$ is defined in a following way:
\eqnsl{
M_m = \left[ \il{w_i}{w_l} + \skk \gak \il{w_i \cdot n}{w_l \cdot n}|_{\Gamma_k} \right]_{i,l=1}^m.
}{Mm} 
For the avoidance of doubt, we introduce following norms:
\begin{itemize}
\item for $f \in L^2(\Omega)$: $\norm{f}_{2} = \left(\int_{\Omega} |f(x)|^2 dx\right)^{\frac{1}{2}} $,
\item for $f \in L^2([0,T])$: $\norm{f}_{2} = \left(\int_0^T |f(t)|^2 dt\right)^{\frac{1}{2}} $,
\item for $f \in L^2(0,T,L^2(\Omega)$: $\norm{f}_{2,2} = \left( \int_0^T \norm{f(t)}_2^2 dt \right)^{\frac{1}{2}}$,
\item for $f \in L^2(\Gamma_K)$: $\norm{f}_{2,\Gamma_K} = \left(\int_{\Gamma_K} |f(x)|^2 dS\right)^{\frac{1}{2}} $.
\end{itemize}

\section{Auxiliary lemma}
In this section, result concerning matrix $M_m$ will be shown. It will be helpful in next sections. 
\begin{lem} \label{matrixMm}
Matrix $M_m$ is invertible.
\end{lem}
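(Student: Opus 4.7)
The plan is to recognize $M_m$ as the Gram matrix of the orthonormal basis vectors $\{w_j\}$ with respect to a suitable symmetric bilinear form, then show that form is positive definite, which immediately gives invertibility.

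First I would define
\begin{align*}
B(u,v) = \langle u, v \rangle + \sum_{k=1}^K \gamma_k \langle u \cdot n, v \cdot n \rangle_{\Gamma_k}
\end{align*}
for $u, v \in V_{H^2}$. Note that the $\Gamma_k$ boundary terms are well defined by the trace theorem (since $V_{H^2} \subset H^2(\Omega) \subset H^1(\Omega)$ and traces on $\Gamma_k$ live in $L^2(\Gamma_k)$). Clearly $B$ is bilinear and symmetric, and by definition of $M_m$ we have $(M_m)_{il} = B(w_i, w_l)$, so for any vector $\xi \in \mathbb{R}^m$,
\begin{align*}
\xi^T M_m \xi = B\Bigl(\sum_{i=1}^m \xi_i w_i, \sum_{l=1}^m \xi_l w_l\Bigr).
\end{align*}

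Second, I would verify that $B$ is positive definite on $V_{H^2}$. Since $\gamma_k > 0$, every term in
\begin{align*}
B(u,u) = \|u\|_{L^2(\Omega)}^2 + \sum_{k=1}^K \gamma_k \|u \cdot n\|_{L^2(\Gamma_k)}^2
\end{align*}
is non-negative, so $B(u,u) \geq \|u\|_{L^2(\Omega)}^2$. In particular, $B(u,u) = 0$ forces $u = 0$ in $L^2(\Omega)$, hence $u = 0$ as an element of $V_{H^2}$.

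Combining the two steps: if $\xi^T M_m \xi = 0$, then $u := \sum_{i=1}^m \xi_i w_i$ satisfies $B(u,u) = 0$, whence $u = 0$. Because $\{w_j\}_{j=1}^\infty$ is an orthonormal basis of $V_{H^2}$ (hence linearly independent), this forces $\xi_1 = \cdots = \xi_m = 0$. Therefore $M_m$ is symmetric positive definite, and in particular invertible. There is no real obstacle here; the only subtle point worth double-checking is that the boundary traces $w_i \cdot n|_{\Gamma_k}$ are meaningful, which follows from $w_i \in H^2(\Omega)$ and the standard trace theorem, so everything in the expression $B(w_i, w_l)$ is finite.
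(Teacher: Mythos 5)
Your proof is correct and follows essentially the same route as the paper: both identify $M_m$ as the Gram matrix of the bilinear form $(u,v) + \sum_k \gamma_k (u\cdot n, v\cdot n)_{\Gamma_k}$ and exploit the linear independence of the orthonormal basis $\{w_j\}$. Your direct positive-definiteness argument is in fact slightly tighter than the paper's contradiction argument, since it makes explicit the step (namely $B(u,u)\ge \|u\|_{L^2(\Omega)}^2$, so $B(u,u)=0$ forces $u=0$) that the paper uses implicitly when passing from $((w_{\overline{k}}-\sum_j\beta_j w_j,\,w_l))=0$ for all $l$ to linear dependence of the $w_j$.
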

\begin{proof}
We will show that $M_m$ has full rank, and thus is invertible. It will be done by contradiction. 

For purpose of this proof, we denote by $W^m = \spn \{ w_k \}_{k=1}^m$ and introduce scalar product in $W^m$ in a following way
\eqns{
((w_i,w_l)) = \il{w_i}{w_l} + \skk \gak \il{w_i \cdot n}{w_l \cdot n}|_{\Gamma_k}.
} 
Suppose, there is a row $\overline{k}$ of matrix $M_m$, that is linear combination of other rows, thus
\eqns{
\Big(\Big( w_{\overline{k}}, w_l \Big)\Big) - \sum_{j \in \{1,\dots , m \} \setminus \{ \overline{k} \} } \beta_j 
\Big(\Big(w_j, w_l\Big)\Big) 
= 
\Big(\Big(w_{\overline{k}} - \sum_{j \in \{1,\dots , m \} \setminus \{ \overline{k} \} } \beta_j w_j, w_l \Big)\Big) & 
= 0 \\
&\forall_{l=\{1,\dots , m \}}
}
for some $\beta_j$. This would imply, that $w_{\overline{k}}$ is a linear combination of other elements of $W^m$, which contradicts orthonormality $ \{ w_j \}_{j = 1}^\infty  $ in $\VH $ .
\end{proof}
\section{Weak formulation of problem and its motivation}
In this section, weak formulation of problem (\ref{stokes_sys}), (\ref{no_slip_cond}), (\ref{WB_B2}) will be derived. By formal multiplication of (\ref{stokes_sys}) by test function $w \in V$ and integration over $\Omega$, we obtain
\eqns{
\int_\Omega v_t w - \nu \int_\Omega \Delta v w +  \int_\Omega \nabla p w = \int_\Omega f w.
} 
By integration by parts, we get
\eqns{
\il{v_t}{w} + \nu \il{\nabla v}{\nabla w} - \nu \il{\nabla v \cdot n}{w}_{\partial \Omega} + \il{p \cdot n}{w}_{\partial \Omega} = \il{f}{w}.
}
Now, we can omit parts of boundary, that are not inlets/outlets due to (\ref{no_slip_cond})
\eqns{
\il{v_t}{w} + \nu \il{\nabla v}{\nabla w} + \skk \il{p \cdot n_k}{w}_{\Gamma_k} - \nu \il{\nabla v \cdot n_k}{w}_{\Gamma_k} = \il{f}{w}.
}
Using condition (\ref{WB_B2}), we get   
\eqns{
\il{v_t}{w} + \nu \il{\nabla v}{\nabla w} 
+ \skk \gak \il{n_k \left(v_t \cdot n_k \right)}{w}_{\Gamma_k}
+ \lk \il{n_k \left(v \cdot n_k \right)}{w}_{\Gamma_k}
+ \il{S_k n_k}{w}_{\Gamma_k}
= \il{f}{w}.
}
Finally, we obtain 
\eqnsl{
\il{v_t}{w} + \nu \il{\nabla v}{\nabla w} 
& + \skk \gak \il{v_t \cdot n_k}{w \cdot n_k}_{\Gamma_k} \\
& + \skk \lk  \il{v   \cdot n_k}{w \cdot n_k}_{\Gamma_k}
= \il{f}{w}
- \skk \il{S_k}{w \cdot n_k}_{\Gamma_k}
}{weak_form}
In order to properly define weak solution, we need some assumptions on functions $S_k$ and $f$:
\eqnsl{
S_k \in H^1(0,T)
}{assumSk}
and
\eqnsl{
f \in H^1(0,T,V).
}{assumf}
\begin{de} \label{weak_sol_def}
Let (\ref{assumSk}) and (\ref{assumf}) hold. Then, we say, that $ v \in H^1(0,T;V) $ is weak solution to problem (\ref{stokes_sys}), (\ref{no_slip_cond}), (\ref{WB_B2}), if for all $ w \in V $ (\ref{weak_form}) holds.  
\end{de}
We require additional time regularity from solution $v$ to give meaning to boundary scalar products (in a sense of trace theorem).
\section{Main theorem - existence of weak solution} 
\begin{theorem}\label{TW_MAIN}
Let $\Omega \subset \mathbb{R}^3$ be a bounded set, whose boundary $\partial \Omega$ is Lipschitz. 
Moreover, inlets/outlets of $\Omega$ are flat(see section 1).   
Additionally, we assume that
\begin{itemize}
\item $\forall_{k = 1, \dots, K} ~\gak , \lk > 0$,
\item $T > 0 $,
\item initial conditions: $v_0 \in \VH $
\end{itemize}   
and (\ref{assumSk}), (\ref{assumf}) hold. Then, there exists unique weak solution to problem (\ref{stokes_sys}), (\ref{no_slip_cond}), (\ref{WB_B2}) such that
\eqns{
v \in H^1(0,T; V).
}
\end{theorem}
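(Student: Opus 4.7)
The plan is to use a Galerkin approximation adapted to the orthonormal basis $\{w_j\}_{j=1}^\infty$ of $\VH$ introduced in Section~2. I look for an approximate solution $v^m(t,x) = \sum_{j=1}^m \alpha_j^m(t)\,w_j(x)$ satisfying the weak identity (\ref{weak_form}) for every test function $w = w_i$, $i = 1,\ldots,m$, together with $v^m(0)$ equal to the orthogonal projection of $v_0$ onto the span of $\{w_1,\ldots,w_m\}$.

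Substituting this ansatz into (\ref{weak_form}) produces a linear system of ODEs for the coefficient vector $\alpha^m(t)$. The crucial observation is that because the time derivative $v^m_t$ appears simultaneously in $(v^m_t, w)$ and in the boundary pairings $\gamma_k(v^m_t \cdot n_k,\, w \cdot n_k)_{\Gamma_k}$, the matrix multiplying $\dot\alpha^m$ is exactly $M_m$ from (\ref{Mm}). By Lemma~\ref{matrixMm} this matrix is invertible, so the system can be put into normal form and Carath\'eodory's theorem yields a unique local solution; the a priori estimates below extend it to $[0,T]$.

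Next I derive two levels of energy estimates. Testing the Galerkin identity with $v^m$ itself, the open-dissipative boundary terms cooperate, giving an identity of the form
\[
\tfrac{1}{2}\tfrac{d}{dt}\Bigl(\|v^m\|_2^2 + \sum_{k=1}^K \gamma_k\|v^m \cdot n_k\|_{2,\Gamma_k}^2\Bigr) + \nu\|\nabla v^m\|_2^2 + \sum_{k=1}^K \lambda_k \|v^m \cdot n_k\|_{2,\Gamma_k}^2 = (f, v^m) - \sum_{k=1}^K (S_k, v^m \cdot n_k)_{\Gamma_k},
\]
and the right-hand side is absorbed using Cauchy--Schwarz, the trace theorem and Young's inequality. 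Gr\"onwall then delivers uniform bounds on $v^m$ in $L^\infty(0,T;L^2) \cap L^2(0,T;H^1)$. To reach the $H^1(0,T;V)$ regularity stated in the theorem, I differentiate the Galerkin equation in time, which is legitimate since $f \in H^1(0,T;V)$ and $S_k \in H^1(0,T)$, and test the result with $v^m_t$. The hypothesis $v_0 \in \VH$ enters here: evaluating (\ref{weak_form}) at $t=0$ expresses $v^m_t(0)$ in terms of $v_0$, $f(0)$ and $S_k(0)$, and because $v_0 \in H^2(\Omega)$ obeys the compatibility encoded in (\ref{VH2}), this yields a uniform bound on $v^m_t(0)$ in $L^2$. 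A second Gr\"onwall argument then produces uniform bounds on $v^m_t$ in $L^2(0,T;V)$.

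Finally I pass to the limit along a subsequence: weak convergence in the relevant Bochner spaces handles the linear bulk terms, while Aubin--Lions compactness combined with trace continuity identifies the boundary integrals, so the limit satisfies Definition~\ref{weak_sol_def}. Uniqueness is then immediate: the difference $v = v_1 - v_2$ of two weak solutions satisfies the homogeneous problem, and testing with $w = v$ in (\ref{weak_form}) produces a Gr\"onwall-type inequality forcing $v \equiv 0$. I expect the main obstacle to be the $H^1$-in-time estimate: both the rigorous derivation of the differentiated equation in a way that preserves the $M_m$ structure on the boundary, and the clean control of $v^m_t(0)$ from the data and the assumption $v_0 \in \VH$, will require careful bookkeeping of the trace terms.
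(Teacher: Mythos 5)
Your proposal follows essentially the same route as the paper: a Galerkin scheme on the $\VH$-basis, invertibility of $M_m$ (Lemma \ref{matrixMm}) to put the ODE system in normal form, a first energy estimate tested with $v^m$, a time-differentiated estimate tested with $v^m_t$ whose initial datum $v^m_t(0)$ is controlled by evaluating the equation at $t=0$ and using $v_0 \in \VH$, followed by weak-limit passage and the standard uniqueness argument. The only cosmetic differences are that the paper also records an intermediate $L^\infty(0,T;L^2)$ bound on $\nabla v^m$ (needed to control the boundary traces entering the $v^m_t(0)$ estimate) and dispenses with Aubin--Lions, relying on linearity alone to pass to the limit.
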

\begin{proof}
Proof is organised in sections to make it more transparent. We will employ Galerkin method. Firstly, existence of solution to approximated system will be shown. Next, we will derive proper estimates, that will enable us to pass to a limit. Finally, we will show, that obtained solution is unique.
\subsection{Approximate system}
Let $ \{ f^m \}_{m = 1}^\infty $ and $ \{ S^m_k \}_{m = 1}^\infty $ be series of smooth functions, such that 
\eqnsl{
f^m \rightarrow f ~~~\text{in}~~~ H^1(0,T,V)
}{fConv} 
and
\eqnsl{
S^m_k \rightarrow S_k ~~~\text{in}~~~ H^1(0,T).
}{skConv} 
Let $ \{ w_i \}_{i = 1}^\infty $ be base of space $\VH $. We define approximate solution $v^m$ in a following way: let
\eqns{
v^m(t,x) = \sum_{i=1}^m g_i^m (t) w_i (x),
}
where functions $ g_i^m (t) $ solve system of equations
\eqnsl{
(v_t^m, w_l) + \nu (\nabla v^m, \nabla w_l) & + \skk \gamma_k (v_t^m \cdot n, w_l \cdot n)_{\Gamma_k} + \skk \lambda_k (v^m \cdot n, w_l \cdot n)_{\Gamma_k}  \\
& 
= (f^m, w_l) - \skk (S_k^m , w_l \cdot n)_{\Gamma_k}, ~~~ l = 1, \dots , m, 
}{stokes1}
with initial condition given by
\eqnsl{
v^m (0,x) = v_{0m}(x) = \sum_{k=1}^m \il{v_0}{w_k} w_k(x).
}{stokes2}
Construction of $v^m (0,x)$ implies, that
\eqnsl{
v^m(0) \rightarrow v_0 \text{~~~~in~~} H^2(\Omega).
}{initial_data_caovergence}
System (\ref{stokes1}) and (\ref{stokes2}) is system of ordinary differential equations. Problem can be reformulated in a  following way
\eqnsl{
\sum_{i=1}^m \il{w_i}{w_l} \frac{d}{dt} g_i^m(t) & + \nu \sum_{i=1}^m \il{\nabla w_i}{\nabla w_l} g_i^m(t) + \skk \sum_{i=1}^m \lk \il{ w_i \cdot n}{ w_l \cdot n}_{\Gamma_k} g_i^m(t) \\ 
&+ \skk \sum_{i=1}^m \gak \il{ w_i \cdot n}{ w_l \cdot n}_{\Gamma_k} \frac{d}{dt} g_i^m(t) = (f^m, w_l) - \skk \il{S^m_k}{w_l \cdot n}_{\Gamma_k} \\
& l = 1, \dots , m,
}{approx1}
with $ g_i^m (0) =  \il{v_0}{w_k}$. Due to lemma \ref{matrixMm} matrix  $M_m = \left[ \il{w_i}{w_l} + \skk \gak \il{w_i \cdot n}{w_l \cdot n}|_{\Gamma_k} \right]_{i,l=1}^m$ invertible, thus we can rewrite system in a following form
\eqnsl{
\frac{d}{dt} g_i^m + \sum_{l=1}^m \alpha_{il} \, g_{i}^m (t) = \sum_{l=1}^m \beta_{il} \left( (f^m, w_l) - \skk \il{S^m_k}{w_l \cdot n}_{\Gamma_k}  \right) \in C^\infty([0,T]) .
}{approx2_1}
Existence of solution $g_i^m(t)$ on time interval $[0,T]$ follows from classical theory of ODE. 
\subsection{Estimates}
In this section, we will show several estimates, that will enable us to pass to the limit in (\ref{stokes1}). Additionally, we have to obtain such regularity from estimates, that boundary terms will make sense (in sense of traces).
\subsubsection{$L^2(0,T;H^1(\Omega)) \cap L^\infty(0,T;L^2(\Omega)) $ control for $ \vm $}
Firstly, we will show some control of $\vm $. To do so, we multiply equation (\ref{stokes1}) by $g_l^m(t)$ and sum from $ 1 $ to $ m $. This effectively means, that equation (\ref{stokes1}) was tested by $\vm $
\eqns{
\il{\vmt}{\vm} + \nu \il{\gvm}{\gvm} &+ \skk \gak \il{\vmtn}{\vmn}_{\Gamma_k} + \skk \lk \il{\vmn}{\vmn}_{\pgk} \\
&= \il{f^m}{\vm} - \skk \il{\sk}{\vmn}_{\Gamma_k}.
}
Using Cauchy-Schwartz inequality, we obtain
\eqns{
\jd \ddt \ldwkw{\vm} + \nu \ldwkw{\gvm} & + \skk \frac{\gak}{2} \ddt \dwgam{\vmn} + \skk \lk \dwgamkw{\vmn}\\
&\leq \ldw{f^m} \ldw{\vm} + \skk \dwgam{\sk} \dwgam{\vmn}.
}
Now, we need to control r.h.s. side of above inequality. To do this, we first use Poincar\'e inequality  
$\il{f^m}{\vm} \le \ldw{f^m} \ldw{\vm} \le  C(p) \ldw{f^m} \ldw{\gvm} $. \\
Additionally, we use Young inequality to obtain 
\eqns{
\frac{1}{2} \ddt \ldwkw{\vm} &+ \nu \ldwkw{\gvm} + \skk \frac{\gak}{2} \frac{d}{dt} \dwgamkw{\vmn}  + \skk \lk \dwgamkw{\vmn} \\
&\leq C(p,\nu) \ldwkw{f^m} + \frac{\nu}{2} \ldwkw{\gvm}  + \skk \left( C(\lk)\dwgamkw{\sk} + \frac{\lk}{2} \dwgamkw{\vmn} \right).
}
Finally, we get 
\eqns{
\ddt \ldwkw{\vm} &+ \nu \ldwkw{\gvm} + \skk \gak \frac{d}{dt} \dwgamkw{\vmn}  + \skk \lk \dwgamkw{\vmn} \\
&\leq C(p,\nu) \ldwkw{f^m}  + \skk C(\lk)\dwgamkw{\sk} .
}
Integrating from $ 0 $ to $ t \in [0,T] $, we get 
\eqns{
&\ldwkw{\vm (t)}  + \nu \int_0^t{\ldwkw{\gvm}} + \skk \gak \dwgamkw{\vm(t) \cdot n} \\
&\leq C(\nu, p) \int_0^t{\ldwkw{f^m}} + \skk C(\lk) \int_0^t{\dwgamkw{\sk}} +\ldwkw{\vm (0)} + \skk \gak \dwgamkw{v^m (0) \cdot n}.
}
By convergence (\ref{initial_data_caovergence}), (\ref{fConv}), (\ref{skConv}) r.h.s. is bounded and following inequality holds
\eqns{
&\sup_{t \in [0,T]} \ldwkw{\vm (t)}  + \nu \ioT{\ldwkw{\gvm}} + \skk \gak \sup_{t \in [0,T]} \dwgam{\vm(t) \cdot n}
\leq
C(\nu, \lk, \norm{f}_{2,2}, \norm{S_k}_{2}, \norm{v_0}_{H^1} ).
}
\subsubsection{$L^\infty(0,T;L^2(\Omega))$ control for $\nabla \vm$}
Now, we will proceed to establish control of $ \nabla \vm $. By multiplying equation (\ref{stokes1}) by $\frac{d}{dt} g^m_l (t)$ and summing from $ 1 $ to $ m $, we  effectively testing equation (\ref{stokes1}) by $ \vmt $
\eqns{
\il{\vmt}{\vmt} + \nu \il{\gvm}{\gvmt} &+ \skk \gak \il{\vmtn}{\vmtn}_{\pgk} + \skk \lk \il{\vmn}{\vmtn}_{\pgk} \\
&= \il{f^m}{\vmt} - \skk \il{\sk}{\vmtn}_{\pgk}.
}
Applying H\"older and Young inequality, we get 
\eqns{
\ldwkw{\vmt} + \frac{\nu}{2} \frac{d}{dt} \ldwkw{\gvm} &+ \skk \gak \dwgamkw{\vmtn} + \skk \lk \frac{d}{dt} \dwgamkw{\vmn}\\
&\leq \jd \ldwkw{f^m} + \jd \ldwkw{\vmt} + \skk \left( C(\gak) \dwgamkw{\sk} + \frac{\gak}{2} \dwgamkw{\vmtn} \right). 
}
After simplification, we get
\eqns{
\frac{1}{2} \ldwkw{\vmt} + \frac{\nu}{2} \frac{d}{dt} \ldwkw{\gvm} &+ \skk \frac{\gak}{2} \dwgamkw{\vmtn} + \skk \lk \frac{d}{dt} \dwgamkw{\vmn} \\
&\leq \frac{1}{2} \ldwkw{f^m} + \skk C(\gak) \dwgamkw{\sk}.
}
Integrating from $ 0 $ to $ t \in [0,T] $, we get 
\eqns{
&\frac{1}{2} \iot{\ldwkw{\vmt}} + \frac{\nu}{2} \ldwkw{\gvm (t)} + \iot{\skk \frac{\gak}{2} \dwgamkw{\vmtn}} + \skk \lk \dwgamkw{v^m (t) \cdot n} \\
&\leq \frac{1}{2} \iot{\ldwkw{f^m}} +   \iot{\skk C(\gak) \dwgamkw{\sk}}  + \frac{\nu}{2} \ldwkw{\gvm (0)} + \skk \lk \dwgamkw{v^m (0) \cdot n}.
}
Again, we see that due to (\ref{initial_data_caovergence}), (\ref{fConv}) and (\ref{skConv}) r.h.s. side is bounded, and thus 
\eqns{
&\frac{1}{2} \iot{\ldwkw{\vmt}} + \frac{\nu}{2} \ldwkw{\gvm (t)} + \iot{\skk \frac{\gak}{2} \dwgamkw{\vmtn}} + \skk \lk \dwgamkw{v^m (t) \cdot n} \leq C^*.
}
In particular following hold
\eqnsl{
\sup_{t \in (0, T)}{\ldwkw{\gvm (t)}} \leq C^*
}{lInfNablaEst}
and 
\eqnsl{
\sup_{t \in (0, T)}{\dwgamkw{v^m (t) \cdot n}} \leq C^*.
}{pomVnabrzeg}
\subsubsection{$L^2(\Omega)$ and $L^2(\Gamma_k)$ control for $v_t(0)$ and $v_t(0) \cdot n$}
Now, we will show estimates on time derivatives in zero time. This will become useful in next subsection. To do this, we  test equation (\ref{stokes1}) by $ \vmt $  
\eqns{
\il{\vmt}{\vmt} + \nu \il{\gvm}{\gvmt} & + \skk \gak \il{\vmtn}{\vmtn}_{\pgk} + \skk \lk \il{\vmn}{\vmtn}_{\pgk}\\
&= \il{f^m}{\vmt} - \skk \il{\sk}{\vmtn}_{\pgk}.
}
After integration by parts, we get
\eqns{
\il{\vmt}{\vmt} - \nu \il{\dvm}{\vmt} &+ \nu \il{\gvm \cdot n}{\vmt}_{\dom} + \skk \gak \il{\vmtn}{\vmtn}_{\pgk} \\
&= \il{f^m}{\vmt} - \skk \il{\sk}{\vmtn}_{\pgk} - \skk \lk \il{\vmn}{\vmtn}_{\pgk}.
}
Like previously, we can employ H\"older and Young inequalities to obtain
\eqns{
&\ldwkw{\vmt} + \skk \gak \dwgamkw{\vmtn} \\
&\leq \frac{1}{2}\ldwkw{f^m} +  \frac{1}{2} \ldwkw{\vmt} + \skk \left( C(\gak) \dwgamkw{\sk} + \frac{\gak}{2} \dwgamkw{\vmtn} \right) + \nu \il{\dvm}{\vmt} \\
&- \nu \il{\gvm \cdot n}{\vmt}_{\dom} - \skk \lk \il{\vmn}{\vmtn}_{\pgk}.
}
Again, using H\"older and Young inequalities and (\ref{fConv}), (\ref{skConv}), we get
\eqns{
&\frac{1}{2} \ldwkw{\vmt} + \skk \frac{\gak}{2} \dwgamkw{\vmtn} \\
&\leq C(f, S_k, \gak) + \nu \il{\dvm}{\vmt} - \nu \il{\gvm}{\vmt}_{\dom} + \skk \left( \lk C(\varepsilon_1) \dwgamkw{\vmn} + \lk \varepsilon_1 \dwgamkw{\vmtn} \right).
}
We see, that due to (\ref{pomVnabrzeg}), term $ \dwgamkw{\vmn} $ is bounded
\eqns{
\frac{1}{2} \ldwkw{\vmt} + \skk \frac{\gak}{4} \dwgamkw{\vmtn} 
\leq 
C(f, S_k, \gak, C^*) + \nu \il{\dvm}{\vmt} - \nu \il{\gvm}{\vmt}_{\dom}.
}
We recall, that $\vm \in \VH $, and thus $\left( \gvm \cdot n, \vmt \right) = \left( \left( \gvm \cdot n \right) \cdot n, \vmtn \right)$
\eqns{
\frac{1}{2} \ldwkw{\vmt} + \skk \frac{\gak}{4} \dwgamkw{\vmtn} 
\leq 
C(f, S_k, \gak, C^*) + \nu \il{\dvm}{\vmt} - \nu \skk \il{\gvm \cdot n}{\vmt \cdot n}_{\Gamma_k}.
}
Now, we can use H\"older and Young inequalities to get 
\eqns{
\frac{1}{4} \ldwkw{\vmt} + \skk \frac{\gak}{8} \dwgamkw{\vmtn} 
\leq 
C(f, S_k, \gak, C^*) + C(\nu) \ldwkw{\dvm} + C(\nu) \dwgamkw{\gvm \cdot n}.
}
Take above inequality at time $ t = 0 $, we obtain 
\eqns{
\ldwkw{\vmt(0)} + \skk \gak \dwgamkw{\vmtn(0)} 
\leq 
C(f, S_k, \gak, C^*) + C(\nu) \ldwkw{\dvm(0)} + C(\nu) \dwgamkw{\gvm(0) \cdot n}.
}
Like previously, due to (\ref{initial_data_caovergence}) r.h.s. side is bounded. Finally, we obtain
\eqnsl{
\ldwkw{\vmt(0)} + \skk \gak \dwgamkw{\vmtn(0)} \leq C^{**}
}{estVmAtTime0}
\subsubsection{$L^2(0,T;H^1(\Omega)) \cap L^\infty(0,T;L^2(\Omega))$ estimates for $\vmt$ }
The aim of this section is to show higher order estimates for $v_t$. This part is crucial, because it will enable us to properly define boundary terms in weak solution(after passing to the limit). \\
Differentiation of equation (\ref{stokes1}) yields 
\eqns{
\il{\vmtt}{z} + \nu \il{\gvmt}{\nabla z} &+ \skk \gak \il{\vmttn}{z \cdot n}_{\pgk} + \skk \lk \il{\vmtn}{z \cdot n}_{\pgk} \\
&= \il{f_t^m}{z} - \skk \il{\frac{d}{dt} \sk}{z \cdot n}_{\pgk}.
}
By testing equation by $\vmt$, we get 
\eqns{
\il{\vmtt}{\vmt} + \nu \il{\gvmt}{\gvmt} &+ \skk \gak \il{\vmttn}{\vmtn}_{\pgk} + \skk \lk \il{\vmtn}{\vmtn}_{\pgk} \\
&= \il{f_t^m}{\vmt} - \skk \il{\frac{d}{dt} \sk}{\vmtn}_{\pgk}.
}
By using Poincar\'e, H\"older and Young inequalities, we get
\eqns{
\frac{1}{2} \frac{d}{dt} \ldwkw{\vmt} + &\nu \ldwkw{\gvmt} + \skk \frac{\gak}{2} \frac{d}{dt} \dwgamkw{\vmtn} + \skk \lk \dwgamkw{\vmtn} \\
&\leq C(p, \nu) \ldwkw{f_t^m} + \frac{\nu}{2} \ldwkw{\gvmt} + \skk \left( C(\lk) \dwgamkw{\frac{d}{dt} \sk} + \lk \dwgamkw{\vmtn} \right). 
}
After moving proper terms on l.h.s., we get
\eqns{
\frac{1}{2} \frac{d}{dt} \ldwkw{\vmt} + \frac{\nu}{2} \ldwkw{\gvmt} &+ \skk \frac{\gak}{2} \frac{d}{dt} \dwgamkw{\vmtn} \leq C(p, \nu) \ldwkw{f_t^m} + \skk C(\lk) \ldwkw{\frac{d}{dt} \sk}.
}
Integration from $ 0 $ to $t \in [0,T] $ yields
\eqns{
\ldwkw{\vmt (t)} & + \nu  \iot{\ldwkw{\gvmt}} + \skk \gak \dwgamkw{\vmt (t) \cdot n} \\ &\leq C(p, \nu) \iot{\ldwkw{f_t^m}} + \skk C(\lk) \iot{\ldwkw{\frac{d}{dt} \sk}} + \ldwkw{\vmt (0)} + \skk \gak \dwgamkw{\vmt (0) \cdot n}.
}
Using (\ref{estVmAtTime0}), (\ref{fConv}), (\ref{skConv}), we see that r.h.s. is bounded independently from $m$. Thus, we finally get
\eqns{
\sup_{t \in (0,T)}{\ldwkw{\vmt (t)}} & + \nu  \ioT{\ldwkw{\gvmt}} + \skk \gak \dwgamkw{\vmt (t) \cdot n} \\ &\leq 
C\left( \norm{f_t}_{L^2(0,T;L^2(\Omega)}, \norm{\ddt S_k}_{L^2(0,T)}, \lk, \nu, C^{**} \right).
}

\subsubsection{Conclusion of obtained estimates}
In previous sections, we obtained following estimates
\begin{align}
	\sup_{t \in (0, T)} \ldwkw{\vm (t)}  + \nu \int_0^T &\ldwkw{\gvm} \, dt + \sup_{t \in (0, T)} \skk \gak \dwgamkw{\vm (t) \cdot n} \leq C_1, \label{szac3a} \\
	&\sup_{t \in (0, T)}{\ldwkw{\gvm (t)}} \leq C_2,\\
	\sup_{t \in (0, T)} \ldwkw{\vmt (t)} + \nu \ioT{&\ldwkw{\gvmt}} + \sup_{t \in (0, T)} \skk \gak \dwgamkw{\vmt (t) \cdot n} \leq C_3.
	\label{szac3b}
\end{align}
\subsection{Passing to the limit}
Based on estimates (\ref{szac3a}) - (\ref{szac3b}), we can extract subsequence (which we again label $ m $), such that 
\begin{gather}
v^{m} \rightharpoonup v \quad \text{ in } H^1(0,T;H^1(\Omega)), \label{conv3a} \\
v^{m} \overset{\ast}{\rightharpoonup} v \quad \text{ in } L^\infty(0, T; H^1(\Omega)), \\
v^{m}_t \overset{\ast}{\rightharpoonup} v_t \quad \text{ in } L^\infty(0, T; L^2(\Omega)) \label{conv3b}
\end{gather}
Using above convergences, we can pass to the limit in (\ref{stokes1}). This system is fully linear, and thus we will refrain from showing detailed proof of passing to the limit. After passing to the limit, we obtain
\eqns{
\il{v_t}{w} + \nu \il{\nabla v}{\nabla w} 
& + \skk \gak \il{v_t \cdot n_k}{w \cdot n_k}_{\Gamma_k}
 + \skk \lk  \il{v   \cdot n_k}{w \cdot n_k}_{\Gamma_k} \\
& = \il{f}{w} 
- \skk \il{S_k}{w \cdot n_k}_{\Gamma_k}
 \forall ~~ w \in \VH
}
Using density of $V$ in $\VH$, we can lower assumptions on test functions
\eqnsl{
\il{v_t}{w} + \nu \il{\nabla v}{\nabla w} 
& + \skk \gak \il{v_t \cdot n_k}{w \cdot n_k}_{\Gamma_k}
 + \skk \lk  \il{v   \cdot n_k}{w \cdot n_k}_{\Gamma_k} \\
& = \il{f}{w} 
- \skk \il{S_k}{w \cdot n_k}_{\Gamma_k}
 \forall ~~ w \in V
}{weakSolProof}

\subsection{Uniqueness of solution}
Suppose, that $v^1$ and $v^2$ are two distinct weak solutions of problem (\ref{stokes_sys}), (\ref{no_slip_cond}), (\ref{WB_B2}), and thus they fulfil
\eqnsl{
\il{v^1_t}{w} + \nu \il{\nabla v^1}{\nabla w} 
& + \skk \gak \il{v^1_t \cdot n_k}{w \cdot n_k}_{\Gamma_k}
 + \skk \lk  \il{v^1   \cdot n_k}{w \cdot n_k}_{\Gamma_k} \\
& = \il{f}{w} 
- \skk \il{S_k}{w \cdot n_k}_{\Gamma_k}
 \forall ~~ w \in V
}{uniq1}
and 
\eqnsl{
\il{v^2_t}{w} + \nu \il{\nabla v^2}{\nabla w} 
& + \skk \gak \il{v^2_t \cdot n_k}{w \cdot n_k}_{\Gamma_k}
 + \skk \lk  \il{v^2   \cdot n_k}{w \cdot n_k}_{\Gamma_k} \\
& = \il{f}{w} 
- \skk \il{S_k}{w \cdot n_k}_{\Gamma_k}
 \forall ~~ w \in V.
}{uniq2}
After subtracting equations (\ref{uniq1}) and (\ref{uniq2}) and introducing function $ v^{D} = v^1 - v^2$, we get 
\eqns{
\il{v^D_t}{w} + \nu \il{\nabla v^D}{\nabla w} 
+ \skk \gak \il{v^D_t \cdot n_k}{w \cdot n_k}_{\Gamma_k}
 + \skk \lk  \il{v^D   \cdot n_k}{w \cdot n_k}_{\Gamma_k}
= 0 & ~~\forall ~ w \in V
}
and $ v^D(0) = 0 $. Function $ w = v^D $ is a good test function
\eqns{
\il{v^D_t}{v^D} + \nu \il{\nabla v^D}{\nabla v^D} 
+ \skk \gak \il{v^D_t \cdot n_k}{v^D \cdot n_k}_{\Gamma_k}
 + \skk \lk  \il{v^D   \cdot n_k}{v^D \cdot n_k}_{\Gamma_k}
= 0
}
Omitting non-negative terms, we get 
\eqns{
\il{v^D_t}{v^D} + \skk \gak \il{v^D_t \cdot n_k}{v^D \cdot n_k}_{\Gamma_k}
\le 0,
}
\eqns{
\ddt \ndk{v^D} + \skk \gak \ddt \norm{v^D \cdot n_k}_{2,\Gamma_k}^2
\le 0.
}
Integrating from $0$ to $t$, we get
\eqns{
\ndk{v^D(t)} + \skk \gak \norm{v^D(t) \cdot n_k}_{2,\Gamma_k}^2
& \le  \ndk{v^D(0)} + \skk \gak \norm{v^D(0) \cdot n_k}_{2,\Gamma_k}^2 \\
& = 0
}
Thus $v^D = 0$ and $v^1 = v^2$ a.e.. This concludes proof of uniqueness and whole proof.
\end{proof}

\section{Acknowledgement}
The authors wishes to thank the Warsaw University of Technology, where the paper was written, for financial support by Rector's grant for scientific clubs.

\end{document}